\newtheorem{theorem}{Theorem}
\newtheorem{lemma}[theorem]{Lemma}
\newtheorem*{lemma*}{Lemma}
\newtheorem*{theorem*}{Theorem}
\theoremstyle{definition}
\newtheorem{remark}[theorem]{Remark}
\newtheorem*{remark*}{Remark}
\newcommand{\all}{\hbox{for all}}
\newcommand{\bra}[2]{\langle#1,#2\rangle}
\newcommand{\dand}{\hbox{ and }}
\newcommand{\half}{{\textstyle\frac{1}{2}}}
\newcommand{\lr}{\Longrightarrow}
\newcommand{\minn}{\min\nolimits}
\newcommand{\R}{\cal R}
\newcommand{\RR}{\mathbb R}
\renewcommand{\S}{\cal S}
\newcommand{\supn}{\sup\nolimits}
\newcommand{\ts}{\textstyle}
\newcommand{\tsum}{\textstyle\sum}
\newcommand{\Lem}{Lemma~\ref}
\newcommand{\Sec}{Section~\ref}
\newcommand{\Thm}{Theorem~\ref}
\newcommand{\Thms}{Theorems~\ref}
\title{$m$th roots of the identity operator and the geometry conjecture}
\author{
Stephen Simons
\thanks{
Department of Mathematics, University of California, Santa Barbara, CA\ 93106-3080, U.S.A.
Email: \texttt{stesim38@gmail.com}.}}
\date{}
\begin{document}
\maketitle
\begin{abstract}
\noindent In this paper, we give three different new proofs of the validity of the\break {\em geometry conjecture} about cycles of projections onto nonempty closed, convex subsets of a Hilbert space.   The first uses a simple minimax\break theorem, which depends on the finite dimensional Hahn-Banach theorem.   The second uses Fan's inequality, which has found many applications in\break optimization and mathematical economics.   The third uses three results on maximally monotone operators on a Hilbert space. 
\end{abstract}

{\small \noindent {\bfseries 2020 Mathematics Subject Classification:}
{Primary 46C05; Secondary 46C07, 49J35, 46A22, 47H05, 47H10.}}

\noindent {\bfseries Keywords:} Hilbert space, projection onto a convex set, minimax theorem, Fan's inequality, maximally monotone operator, resolvent.

\section{Introduction}\label{Introduction}
Let $H$ be a Hilbert space and $m \ge 2$.   For $j = 1, \dots, m$, let $C_j$ be a nonempty closed convex subset of $H$ and $P_j$ be the projection of $H$ onto $C_j$.   The element $z = (z_1,z_2,\dots,z_m)$ of $H^m$ is said to be a  {\em cycle} if
\begin{equation*}
z_1 = P_1z_m,\ z_2 = P_2z_1,\ \dots,\ z_m = P_mz_{m - 1}.
\end{equation*}
The {\em geometry conjecture}, which was first formulated in \cite[Conjecture 5.1.6]{BB} in 1997, stated that {\em there exist $v_1,\dots v_m \in H$ such that $v_1 + \cdots v_m = 0$ and}
\begin{equation}\label{w}
\{z_m: (z_1,\dots z_m)\hbox{ is a cycle}\} =   C_{m} \cap
(C_{m - 1} + v_{m - 1}) \cap
(C_{1} + \tsum_{i = 1}^{m - 1}v_i).
\end{equation}
This conjecture was finally solved in the affirmative in \cite[Theorem 9, pp.\ 7--8]{ABRW}.   In this paper, we give a proof of this conjecture which is simpler than that in \cite{ABRW}, and extends the result to a more general situation.   (See \Thm{Pthm}.)
\par
\Sec{ALGsec} contains the algebraic results on which our analysis is based.   It discusses certain linear operators on a vector space, $X$, which has no additional structure.
\par
In \Sec{HILsec}, we assume that $X$ is endowed with a Hilbert space structure, and we assume that the operator $R$ introduced in \Sec{ALGsec} is an isometry.   All the subsequent analysis depends on the disarmingly simple Equations \eqref{3Norms} and \eqref{m2AB}, which will be used 6 times in what follows.    It is important to realize that the Hilbert space $X$ is not necessarily the same as the Hilbert space, $H$, discussed at the beginning of this introduction --- for the material in \Sec{Cycles}, $X$ will be $H^m$.
\par
The first main result in \Sec{HILsec} is the somewhat technical \Lem{mLem}, which is about the support function of a nonempty closed, convex subset of $X$.   Our proof of this goes by way of a simple minimax theorem, which depends on nothing more complicated than the Hahn-Banach theorem in finite dimensional spaces.  We state the relevant minimax theorem in \Thm{MMT}.  We do not work in the Hilbert space $X$, but the Hilbert subspace $Y$ introduced in \Sec{ALGsec}.   \Thm{Pthm} is the main result of the paper, and gives a generalization to this more abstract situation of the solution to the geometry conjecture.
\par
In \Sec{Cycles}, we apply \Thm{Pthm} to obtain a solution to the geometry conjecture.   (See \Thms{vThm} and \ref{Geometric}.)
\par
In \Sec{Other} we give two other ways of obtaining \Thm{Pthm}, the {\em  Fan's inequality approach} and the {\em maximally monotone operator approach}.   
\par
In \Thm{FanThm}, we state Fan's inequality, and show how this can be used instead of \Thm{MMT} to obtain \Lem{mLem}.   This is the {\em  Fan's inequality approach}.
\par
In \Lem{eLem}, we give a result very similar to \Lem{mLem} using three (non--trivial) results on maximally monotone operators on a Hilbert space. This corresponds more closely with the method used in \cite{ABRW}, and is what we call the {\em maximally monotone operator approach}.
\par
The author would like to express his sincere thanks to Heinz Bauschke and Xianfu Wang for reading through the first version of this paper, and making suggestions that have considerably improved the exposition. 
\par
All spaces in this paper will be {\em real}.
\section{The average operator}\label{ALGsec}
Let $X$ be a linear space, $R\colon\ X \to X$ be linear and $R^m = I_X$.      Let
\begin{equation*}
A := \ts\frac1m\sum_{i = 1}^{m}R^{i}. \hbox{   ($A$ stands for {\em average}.)}
\end{equation*}  Let $Y$ be the subspace $\{y \in X\colon\  Ay = 0\}$ of $X$.   We now define $S\colon\ X \to X$ by $S:= R - I_X$ and $Q\colon\ Y \to X$ by $Q:= \frac1m\tsum_{i = 1}^{m - 1}iR^i$.   (Compare with\break \cite[Equation (28), p.\ 4]{ABRW}.)        
\begin{lemma}\label{Alem}
We have
\begin{equation}\label{SXY}
S(X) \subset Y,
\end{equation}
\begin{equation}\label{SB}
\all\ y \in Y,\ Qy \in Y \dand S(Qy) = y.
\end{equation}
\end{lemma}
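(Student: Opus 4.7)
The two claims follow from direct computation using $R^m=I_X$, plus the observation that $A$, $Q$, $S$, being polynomials in $R$, all commute with one another.

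\textbf{Plan for \eqref{SXY}.} I would show that $AS=0$ as an operator on all of $X$. Indeed, $AS = AR - A$, and a re-indexing gives
\[AR = \ts\frac1m\sumn_{i=1}^{m} R^{i+1} = \ts\frac1m\bigl(R^2+R^3+\cdots+R^{m}+R^{m+1}\bigr).\]
The crux is that $R^{m+1}=R$ (since $R^m=I_X$), so the right-hand side equals $\frac1m\sumn_{i=1}^{m}R^i = A$. Hence $AS = 0$, which is exactly $S(X)\subset Y$.

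\textbf{Plan for the second half of \eqref{SB} (identity $SQy=y$).} Because both $Q$ and $S$ are polynomials in $R$, the product $SQ$ makes sense on all of $X$, and I would compute
\[SQ \;=\; (R-I_X)\cdot\ts\frac1m\sumn_{i=1}^{m-1}iR^{i} \;=\; \ts\frac1m\sumn_{i=1}^{m-1}iR^{i+1}-\ts\frac1m\sumn_{i=1}^{m-1}iR^{i}.\]
Shifting the index in the first sum and separating the term $i=m-1$ (where $R^{i+1}=R^{m}=I_X$) produces a telescoping cancellation, leaving
\[SQ \;=\; \ts\frac{m-1}{m}I_X - \ts\frac1m\sumn_{i=1}^{m-1}R^{i} \;=\; I_X - \ts\frac1m\sumn_{i=1}^{m}R^{i} \;=\; I_X - A.\]
Hence for any $y\in Y$, $S(Qy) = y - Ay = y$.

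\textbf{Plan for the first half of \eqref{SB} ($Qy\in Y$).} This follows from commutativity: $A$ and $Q$ are both polynomials in $R$, so $AQ=QA$. Therefore, for $y\in Y$,
\[A(Qy) \;=\; Q(Ay) \;=\; Q(0) \;=\; 0,\]
so $Qy\in Y$.

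\textbf{Expected obstacle.} None of the three steps is genuinely difficult; the only thing that requires a little care is the telescoping in the computation of $SQ$, where the identity $R^m=I_X$ must be used precisely once (to collapse the $i=m-1$ term into $I_X$) so that the result assembles into $I_X-A$ rather than something messier. Everything else is routine algebra in the commutative algebra generated by $R$.
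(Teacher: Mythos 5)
Your proof is correct and follows essentially the same route as the paper's: the same reindexing of sums of powers of $R$ together with $R^{m}=I_X$. The only cosmetic differences are that you establish the full operator identity $SQ=I_X-A$ (which the paper proves pointwise on $Y$ and records as a strengthening only in the Remark following the Lemma), and that you obtain $Qy\in Y$ from the commutativity of polynomials in $R$ rather than from the identity $AR^k=A$.
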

\begin{proof}
We have 
\begin{equation*}
mAS = \tsum_{i = 1}^{m}(R^{i + 1} - R^i) = R^{m + 1} - R = 0
\end{equation*}
and \eqref{SXY} follows immediately.
\par
Note that
\begin{equation*}
mAR = mRA =  \tsum_{i = 1}^{m}R^{i + 1} = \ts\sum_{i = 2}^{m}R^{i} + R^{m + 1} = \ts\sum_{i = 2}^{m}R^{i} + R = mA.
\end{equation*}
Thus $AR = RA = A$. It is easy to deduce from this that, if $1 \le k \le m$ then
\begin{equation}\label{A1}
AR^k = A\hbox{ and }R^kA = A.
\end{equation}
Let $y \in Y$.  Then $A(Qy) = (AQ)y = \frac1m\tsum_{i = 1}^{m - 1}iAR^iy$.   From \eqref{A1},  $A(Qy) = \frac1m\tsum_{i = 1}^{m - 1}iAy = 0$, and so $Qy \in Y$.   Now
\begin{equation*}
mRQy = \tsum_{i = 1}^{m - 1}iR^{i + 1}y = \tsum_{i = 2}^{m}(i - 1)R^{i}y = \tsum_{i = 2}^{m - 1}(i - 1)R^{i}y + my - y
\end{equation*}
and
\begin{equation*}
mQy = \tsum_{i = 1}^{m - 1}iR^iy = Ry + \tsum_{i = 2}^{m - 1}iR^iy.
\end{equation*}
Thus  
\begin{align*}
mS(Qy) &= mRQy - mQy = \tsum_{i = 2}^{m - 1}(i - 1)R^{i}y + my- y -  Ry - \tsum_{i = 2}^{m - 1}iR^iy\\
&= my - y - Ry - \tsum_{i = 2}^{m - 1}R^iy = my - mAy = my.
\end{align*}
The proof of \eqref{SB} is now completed by dividing by $m$.
\end{proof}
\begin{remark}
Using the same argument as in \eqref{SB}, we can prove that, for all $y \in Y$, $QSy = y$.
If we define $S_0 := S|_Y$ then it follows that $S_0$ and $Q$ are bijections from $Y$ onto $Y$ and $S_0^{-1} = Q$. 
\par
There are actually stronger versions of some of the relationships\break established above.   \eqref{SB} can be strengthened to $SQ = QS =  I_X - A$. (Compare\break \cite[Proposition 3, p.\ 4]{ABRW}.)   It can also be proved that $AQ = QA =  \ts\frac{m - 1}{2}A$.  (Compare \cite[Proposition 2, p.\ 4]{ABRW}.)   Finally, $A$ is a {\em projection}, that is to say, $A^2 = A$ and, further, $SA = AS = 0$.
\par
 The material in this section is related to the recent paper \cite{ABRW2}.
\end{remark}
\section{Results for Hilbert spaces}\label{HILsec}
We now suppose that $X$ is a Hilbert space.   We have the following Theorem with two parts and a one--line proof.  \Thm{OneLine}(a) is due to Xianfu Wang.    
\begin{theorem}\label{OneLine}
Let ${\R}\colon X \to X$ be a linear map and ${\S}:= {\R} - I_X$.   Then:
\par\noindent
{\rm(a)}\enspace ${\R}$ is nonexpansive if, and only if, for all $x \in X$, $\|{\S}x\|^2 \le -2\bra{{\S}x}{x}$.
\par\noindent
{\rm(b)}\enspace ${\R}$ is an isometry if, and only if, for all $x \in X$, $\|{\S}x\|^2 = -2\bra{{\S}x}{x}$.
\end{theorem}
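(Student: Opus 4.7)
The plan is to exploit the identity $\R = \S + I_X$ and expand a single Hilbert-space norm. Concretely, for any $x \in X$, I would write
\begin{equation*}
\|\R x\|^2 = \|\S x + x\|^2 = \|\S x\|^2 + 2\bra{\S x}{x} + \|x\|^2,
\end{equation*}
which is the one identity the whole argument rests on. Rearranging gives $\|\R x\|^2 - \|x\|^2 = \|\S x\|^2 + 2\bra{\S x}{x}$.

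For part (a), $\R$ is nonexpansive exactly when $\|\R x\|^2 \le \|x\|^2$ for every $x \in X$ (linearity means the norm inequality at $x$ is automatic from the squared version), and by the displayed identity this is equivalent to $\|\S x\|^2 + 2\bra{\S x}{x} \le 0$, i.e., $\|\S x\|^2 \le -2\bra{\S x}{x}$. Part (b) is the same computation with equality in place of inequality: $\R$ is an isometry iff $\|\R x\|^2 = \|x\|^2$ for all $x$, iff $\|\S x\|^2 = -2\bra{\S x}{x}$.

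There is essentially no obstacle; the only thing to be mindful of is that the equivalence between the vector inequality $\|\R x\| \le \|x\|$ and its squared form is trivial because both sides are nonnegative, and the identity above is valid in any real Hilbert space. So the entire proof is simply the expansion of $\|\S x + x\|^2$ followed by a rearrangement, which justifies the author's remark that this is a theorem with a one-line proof.
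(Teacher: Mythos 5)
Your proof is correct and rests on exactly the same identity as the paper's one-line proof, namely $\|\R x\|^2 - \|x\|^2 = \|\S x\|^2 + 2\bra{\S x}{x}$; you obtain it by expanding $\|\S x + x\|^2$ while the paper factors $\bra{\R x - x}{\R x + x}$, which is the same computation read in the other direction. No further comment is needed.
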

\begin{proof}
Here is the one line: for all $x \in X$, 
\begin{equation*}
\|{\S}x\|^2 + 2\bra{{\S}x}{x} = \bra{{\S}x}{{\S}x + 2x} = \bra{{\R}x - x}{{\R}x + x} = \|{\R}x\|^2 - \|x\|^2.
\end{equation*}
That completes the proof.
\end{proof}
We now suppose that $R$, $Y$, $S$ and $Q$ are as in \Sec{ALGsec}, and that $R$ is a {\em linear isometry}.  Obviously, $Y$ is closed, so $Y$ is a Hilbert space in its own right.  Furthermore, $S$ is continuous on $X$ and $Q$ is continuous on $Y$.   From \Thm{OneLine}(b),
\begin{equation}\label{3Norms}
\all\ x \in X,\ \|Sx\|^2 = -2\bra{Sx}{x}.
\end{equation}
Now let $y \in Y$.  Setting $x = Qy$ in \eqref{3Norms} and using \eqref{SB}, $\|y\|^2 = -2\bra{y}{Qy}$.   Consequently,
\begin{equation}\label{m2AB}
\ts \bra{Qy}{y} = -\half\|y\|^2.
\end{equation}
In what follows, we suppose that $C$ is a nonempty, closed convex subset of $X$ and $\sigma_C$ is the support function of $C$, defined by $\sigma_C(t) := \sup\bra{C}{t}$.  Let
\begin{equation*}
D := \{y \in Y\colon\ \sigma_C(y) + \half\|y\|^2 \le 0\}.
\end{equation*}
We now give some properties of the set $D$.
\begin{lemma}\label{Dlem}
The set $D$ is nonempty, convex and weakly compact. 
\end{lemma}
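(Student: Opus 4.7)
The plan is to prove the three properties in turn, exploiting the fact that $D$ is a sublevel set of the convex, lower semicontinuous function $f\colon Y \to \rbar$ defined by $f(y) := \sigma_C(y) + \half\|y\|^2$.

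\textbf{Nonemptiness and convexity.} Since $C$ is nonempty, $\sigma_C(0) = \sup_{c \in C}\bra{c}{0} = 0$, so $f(0) = 0$ and $0 \in D$. For convexity, note that $\sigma_C$ is a supremum of continuous linear functionals and $\half\|\cdot\|^2$ is convex, so $f$ is convex, making $D = \{f \le 0\}$ convex.

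\textbf{Boundedness.} This is the step that actually uses the geometry of $C$ rather than general nonsense. Fix any $c_0 \in C$ (possible since $C \ne \emptyset$). For $y \in D$, using $\sigma_C(y) \ge \bra{c_0}{y} \ge -\|c_0\|\,\|y\|$ together with $\sigma_C(y) \le -\half\|y\|^2$, we obtain $\half\|y\|^2 \le \|c_0\|\,\|y\|$, hence $\|y\| \le 2\|c_0\|$. So $D$ is contained in the closed ball of radius $2\|c_0\|$ in $Y$.

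\textbf{Weak closedness and compactness.} The function $\sigma_C$ is lower semicontinuous (sup of continuous affine functions) and convex, hence weakly lower semicontinuous; the same holds for $\half\|\cdot\|^2$. Thus $f$ is weakly lower semicontinuous on $Y$, so $D = \{f \le 0\}$ is weakly closed in $Y$. Combined with boundedness, the Banach--Alaoglu / reflexivity theorem applied in the Hilbert space $Y$ yields that $D$ is weakly compact. (Here it is relevant that $Y$ is closed in $X$, which was noted just before the lemma, so the weak topology on $Y$ is the restriction of the weak topology on $X$.)

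I do not expect any real obstacle: the only substantive step is the boundedness estimate, which is a one-line Cauchy--Schwarz argument based on picking a single point of $C$. The results on the average operator from \Sec{ALGsec} and on the isometry $R$ (equations \eqref{3Norms} and \eqref{m2AB}) do not appear to be needed here; they presumably enter in the subsequent, more technical \Lem{mLem} that this lemma feeds into.
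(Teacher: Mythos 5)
Your proof is correct and follows essentially the same route as the paper: $0 \in D$ for nonemptiness, convexity and (weak) closedness from the convexity and lower semicontinuity of $\sigma_C + \half\|\cdot\|^2$, and the identical Cauchy--Schwarz estimate $\|d\| \le 2\|c_0\|$ from a fixed $c_0 \in C$ to get boundedness and hence weak compactness. No issues.
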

\begin{proof}
Since $D \ni 0$, $D$ is nonempty.   Since $\sigma_C + \half \|\cdot\|^2$ is proper, convex and lower semicontinuous on $Y$, $D$ is convex and closed, hence weakly closed.   Fix $c_0 \in C$.   Then
\begin{equation*}
d \in D \lr \bra{c_0}{d} + \half\|d\|^2 \le 0 \lr -\|c_0\|\|d\| + \half\|d\|^2 \le 0 \lr \|d\| \le 2\|c_0\|,
\end{equation*}
so $D$ is bounded and, consequently, weakly compact.
\end{proof}  
We now state a simple minimax theorem that depends only on the finite\break dimensional Hahn-Banach theorem. See \cite[Theorem 3.2, p.\ 25]{HBM} and\break \cite[Theorem 3.1, p.\ 17]{MANDM}.   The proof in \cite{MANDM} is somewhat more direct.   There are many more minimax theorems - there is a survey of these in \cite{SURVEY}.
\begin{theorem}\label{MMT}
Let $B$ be a nonempty convex subset of a vector space, $D$ be a nonempty convex subset of a vector space and $D$ also be a compact Hausdorff topological space.   Let $f\colon\ B \times D \to \RR$ be concave with respect to its first variable, and convex and lower semicontinuous with respect to its second variable.   Then
\begin{equation}
\supn_{b \in B}\minn_{d \in D}f(b,d) = \minn_{d \in D}\supn_{b \in B}f(b,d).
\end{equation}
\end{theorem}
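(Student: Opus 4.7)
The plan is to use the standard two-step structure for a Sion-type minimax theorem. The easy direction $\sup_B\min_D f\le\min_D\sup_B f$ is immediate: for any $b_0\in B$ and $d_0\in D$ we have $\min_d f(b_0,d)\le f(b_0,d_0)\le\sup_b f(b,d_0)$, and then sup over $b_0$ on the left and min over $d_0$ on the right. All the work is in the reverse inequality, and the idea is to use compactness of $D$ to reduce to a finite-dimensional separation argument.

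For the reverse inequality, I would argue by contradiction: suppose $\sup_b\min_d f(b,d)<\alpha<\min_d\sup_b f(b,d)$ for some real $\alpha$. The right-hand inequality says that for every $d\in D$ there is some $b\in B$ with $f(b,d)>\alpha$, so the sets $U_b:=\{d\in D:f(b,d)>\alpha\}$ form an open cover of $D$ (openness comes from the lower semicontinuity of $f(b,\cdot\,)$). By compactness of $D$, pick a finite subcover corresponding to $b_1,\dots,b_n\in B$, so that for every $d\in D$ we have $\max_{1\le i\le n}f(b_i,d)>\alpha$.

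The next step is a finite-dimensional geometric reduction. Define $\varphi\colon D\to\RR^n$ by $\varphi(d):=(f(b_1,d),\dots,f(b_n,d))$, and set
\begin{equation*}
V := \varphi(D)+\RR_+^n.
\end{equation*}
The crucial observation is that $V$ is \emph{convex}: if $z^{(1)},z^{(2)}\in V$ correspond to $d_1,d_2\in D$, then convexity of $D$ lets us form $d:=\lambda d_1+(1-\lambda)d_2\in D$, and convexity of each $f(b_i,\cdot\,)$ yields $f(b_i,d)\le\lambda f(b_i,d_1)+(1-\lambda)f(b_i,d_2)$, placing $\lambda z^{(1)}+(1-\lambda)z^{(2)}$ in $V$. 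The covering property of $\{b_1,\dots,b_n\}$ is exactly the statement that the vector $(\alpha,\dots,\alpha)$ does not lie in $V$.

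With $V$ convex and $(\alpha,\dots,\alpha)\notin V$, I now invoke the finite-dimensional Hahn--Banach theorem to obtain $\lambda=(\lambda_1,\dots,\lambda_n)\in\RR^n\setminus\{0\}$ separating the point from $V$: $\alpha\sum_i\lambda_i\le\lambda\cdot z$ for all $z\in V$. Because $V+\RR_+^n=V$, the functional must satisfy $\lambda_i\ge0$ for every $i$; after normalizing so that $\sum_i\lambda_i=1$, the separation inequality becomes $\alpha\le\sum_i\lambda_i f(b_i,d)$ for every $d\in D$. Finally, concavity of $f$ in its first variable, together with convexity of $B$, gives $b:=\sum_i\lambda_i b_i\in B$ with $f(b,d)\ge\sum_i\lambda_i f(b_i,d)\ge\alpha$ for all $d$, whence $\min_d f(b,d)\ge\alpha$, contradicting $\sup_b\min_d f(b,d)<\alpha$.

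The main obstacle I expect is justifying that the finite-dimensional separation can be applied cleanly. One has to verify both that $V$ is genuinely convex (which is where convexity of $D$ and convexity of $f$ in $d$ enter in a coupled way) and that the separating $\lambda$ can be taken with nonnegative components (which follows from $V$ being a recession cone in the $\RR_+^n$ direction, but needs to be spelled out). Once those two points are handled, the rest of the argument is a direct book-keeping exercise.
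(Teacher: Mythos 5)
Your proof is correct. Note, however, that the paper does not actually prove Theorem~\ref{MMT}: it states the result and refers the reader to \cite[Theorem 3.2]{HBM} and \cite[Theorem 3.1]{MANDM}, so there is no in-paper argument to compare against line by line. What you supply is the classical Ky Fan/Sion-style proof: the trivial inequality $\sup\min\le\min\sup$, then, for the converse, a finite subcover of $D$ by the open sets $\{d:f(b,d)>\alpha\}$ (lower semicontinuity is exactly what makes these open, and no compatibility between the topology and the linear structure of $D$ is needed), followed by separation of the point $(\alpha,\dots,\alpha)$ from the convex set $V=\varphi(D)+\RR_+^n$ in $\RR^n$. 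The two points you flag as delicate are indeed the only delicate ones, and both are handled correctly: convexity of $V$ uses convexity of $D$ and of each $f(b_i,\cdot)$ jointly, and nonnegativity of the separating functional follows from $V+\RR_+^n=V$ (one should perhaps add explicitly that a point of $\RR^n$ not belonging to a convex set can always be properly, if not strictly, separated from it, since $V$ need be neither closed nor open). This is fully consistent with the paper's remark that the theorem ``depends on nothing more complicated than the Hahn--Banach theorem in finite dimensional spaces''; the cited sources reach the same conclusion by a somewhat different elementary route (via a sublinear-functional form of Hahn--Banach), whereas your argument is the self-contained covering-plus-separation version. Either way the proposal is a valid proof of the statement as given.
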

\par
\begin{lemma}\label{mLem}
There exist $d,e \in Y$ such that
\begin{equation}\label{mSe}
d = Se \in D
\end{equation}
and
\begin{equation}\label{mz}
\all\ x \in S^{-1}D,\ \sigma_{C}(Se) + \bra{Sx -  Se}{e} - \sigma_{C}(Sx) \le 0.
\end{equation}
\end{lemma}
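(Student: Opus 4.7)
The plan is to reduce \eqref{mSe}--\eqref{mz} to a variational inequality on $D$ via the substitution $e = Qd$, and then solve that variational inequality by applying the minimax theorem \Thm{MMT}.

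First I would observe that by \Lem{Dlem}, $D \subset Y$, and the remark after \Lem{Alem} says $S|_Y$ is a bijection onto $Y$ with inverse $Q$. Moreover, the expression in \eqref{mz} depends on $x$ only through $Sx$, so as $x$ ranges over $S^{-1}D$ the point $Sx$ ranges exactly over $D$. Thus it suffices to find $d \in D$ with
\begin{equation*}
\sigma_C(d) + \bra{d' - d}{Qd} - \sigma_C(d') \le 0 \qquad \all\ d' \in D,
\end{equation*}
after which setting $e := Qd$ gives $e \in Y$ and $Se = SQd = d \in D$ by \eqref{SB}, which is \eqref{mSe}, and the inequality above rewritten with $d' = Sx$, $Qd = e$ yields \eqref{mz}.

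Next I would define $f\colon D \times D \to \RR$ by
\begin{equation*}
f(b,d) := \sigma_C(d) + \bra{b - d}{Qd} - \sigma_C(b),
\end{equation*}
and verify the hypotheses of \Thm{MMT} with both convex sets equal to $D$ (equipped with the weak topology on $Y$). Concavity of $f(\cdot,d)$ is clear since $-\sigma_C$ is concave and $b \mapsto \bra{b}{Qd}$ is linear. The main observation — which is where \Sec{HILsec} really does its work — is that \eqref{m2AB} gives $-\bra{d}{Qd} = \half\|d\|^2$, hence
\begin{equation*}
f(b,d) = \sigma_C(d) + \ts\half\|d\|^2 + \bra{b}{Qd} - \sigma_C(b),
\end{equation*}
which is the sum of a convex lower semicontinuous function of $d$ and a weakly continuous linear function of $d$; hence it is convex and weakly lsc in $d$. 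Weak compactness of $D$ is given by \Lem{Dlem}.

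Applying \Thm{MMT} then yields
\begin{equation*}
\supn_{b \in D}\minn_{d \in D}f(b,d) = \minn_{d \in D}\supn_{b \in D}f(b,d).
\end{equation*}
Since $f(b,b) = 0$, the left-hand side is $\le 0$, so there exists $d \in D$ with $\sup_{b \in D} f(b,d) \le 0$, which is the required variational inequality. The main obstacle is identifying the correct $f$: the term $\bra{b - d}{Qd}$ is, a priori, not convex in $d$, and it is only the identity \eqref{m2AB} that turns the diagonal part $-\bra{d}{Qd}$ into the convex function $\half\|d\|^2$. Once that is spotted, the rest is a clean application of \Thm{MMT}.
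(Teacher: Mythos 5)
Your proposal is correct and follows essentially the same route as the paper: the same function $f(b,d) = \sigma_C(d) + \frac12\|d\|^2 + \bra{b}{Qd} - \sigma_C(b)$ (you merely write it first in the equivalent form $\sigma_C(d)+\bra{b-d}{Qd}-\sigma_C(b)$ and use \eqref{m2AB} to convert), the same application of Theorem~\ref{MMT} on $D\times D$ with the weak topology, the same observation that $f(b,b)=0$, and the same substitution $e:=Qd$ with \eqref{SB} giving $Se=d\in D$. Nothing is missing.
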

\begin{proof}
Define $f\colon\ D \times D \to \RR$ by
\begin{equation*}
f(b,d) := \sigma_{C}(d) + \half\|d\|^2 + \bra{b}{Qd} -  \sigma_{C}(b).
\end{equation*}
Then $f$ is concave with respect to its first variable, and convex and lower semicontinuous with respect to its second variable.   Let $b$ be an arbitrary element of $D$.   Then, from \eqref{m2AB},
\begin{align}
\minn_{d \in D}f(b,d) \le f(b,b) &= \sigma_{C}(b) + \half\|b\|^2 + \bra{b}{Qb} -  \sigma_{C}(b)\label{bb1}\\
&= \half\|b\|^2 + \bra{b}{Qb} = 0.\label{bb2}
\end{align}
Thus $\supn_{b \in D}\minn_{d \in D}f(b,d) \le 0$.   From \Thm{MMT}, $\minn_{d \in D}\supn_{b \in D}f(b,d) \le 0$.   Consequently, there exists $d \in D$ such that,
\begin{equation*} 
\supn_{b \in D}[\sigma_{C}(d) + \half\|d\|^2 + \bra{b}{Qd} - \sigma_{C}(b)] \le  0.
\end{equation*}
From \eqref{m2AB}, $\half\|d\|^2 = -\bra{Qd}{d}$, and so we can rephrase the above as
\begin{equation*} 
\supn_{b \in D}[\sigma_{C}(d) + \bra{b - d}{Qd} - \sigma_{C}(b)] \le  0.
\end{equation*}
Let $e := Qd \in Y$.  From \eqref{SB},  $Se = d \in D$, so \eqref{mSe} is satisfied and,
\begin{equation*} 
\supn_{b \in D}[\sigma_{C}(Se) + \bra{b - Se}{e} -  \sigma_{C}(b)] \le 0.
\end{equation*}
\eqref{mz} follows easily from this.
\end{proof}
We now come to the main result of this section.
\begin{theorem}\label{Pthm}
Let $d,e$ be as in \Lem{mLem}, $P_C$ be the projection of $X$ onto $C$ and $z \in C$. Then the following conditions are equivalent:
\begin{gather}
z = P_CRz\label{P1}\\
z \in S^{-1}D\label{P3}\\
z \in S^{-1}d.\label{P5}
\end{gather}
\end{theorem}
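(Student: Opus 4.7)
The plan is to establish the cyclic implications $\eqref{P5} \Rightarrow \eqref{P3} \Rightarrow \eqref{P1} \Rightarrow \eqref{P3} \Rightarrow \eqref{P5}$. The first is immediate because \eqref{mSe} guarantees $d \in D$, so $Sz = d$ forces $Sz \in D$. For $\eqref{P1} \Leftrightarrow \eqref{P3}$, I would translate each condition into a comparison between $\sigma_C(Sz)$ and $\bra{z}{Sz}$: the projection identity $z = P_C Rz$, combined with the standing hypothesis $z \in C$, is equivalent (by the standard variational characterization of a projection) to $\sigma_C(Sz) \le \bra{z}{Sz}$. On the other hand, $Sz \in D$ --- using \eqref{SXY} to see that $Sz \in Y$ is automatic --- means $\sigma_C(Sz) + \half\|Sz\|^2 \le 0$, and this is the same inequality once \eqref{3Norms} is used to rewrite $\half\|Sz\|^2$ as $-\bra{Sz}{z}$.

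The heart of the argument is $\eqref{P3} \Rightarrow \eqref{P5}$. Starting from $z \in C$ with $Sz \in D$, the previous step in fact delivers the equality $\sigma_C(Sz) = \bra{z}{Sz}$. I would then instantiate \eqref{mz} at $x = z$ to obtain $\sigma_C(d) + \bra{Sz - d}{e} \le \sigma_C(Sz)$, and apply \eqref{3Norms} with its variable taken to be $e - z$. Since $Se = d$, one has $S(e - z) = d - Sz$, and \eqref{3Norms} yields $\half\|d - Sz\|^2 = \bra{Sz - d}{e - z}$. Expanding the right side rewrites $\bra{Sz - d}{e} = \half\|d - Sz\|^2 + \bra{Sz}{z} - \bra{d}{z}$; substituting back and using $\sigma_C(Sz) = \bra{z}{Sz}$ causes the $\bra{Sz}{z}$ contributions to cancel and leaves $\sigma_C(d) + \half\|d - Sz\|^2 \le \bra{z}{d}$. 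Because $z \in C$, $\bra{z}{d} \le \sigma_C(d)$, which forces $\half\|d - Sz\|^2 \le 0$ and hence $Sz = d$.

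The main obstacle is spotting the correct vector at which to apply \eqref{3Norms}. The choice $x = e - z$ is natural only in hindsight, since it produces $S(e - z) = d - Sz$ on the nose, directly matching the $Sz - d$ term that \eqref{mz} leaves behind; without this link, the bilinear term $\bra{Sz - d}{e}$ has no obvious way to be absorbed into $\sigma_C(d) - \bra{z}{d}$. Once that substitution is identified, everything else is a straightforward rearrangement together with the support-function inequality coming from $z \in C$.
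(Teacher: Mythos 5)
Your proposal is correct and follows essentially the same route as the paper: \eqref{P1}$\iff$\eqref{P3} via the variational characterization of the projection together with \eqref{3Norms}, and then \eqref{P3}$\Rightarrow$\eqref{P5} by combining \eqref{mz} at $x = z$ with the support inequality $\bra{d}{z} \le \sigma_C(d)$ and \eqref{3Norms} applied to $\pm(z - e)$. The only difference is cosmetic: the paper first derives $\bra{S(z-e)}{z-e} \ge 0$ and then invokes \eqref{3Norms}, whereas you invoke \eqref{3Norms} first so that the term $\half\|Sz - d\|^2$ appears explicitly before the final cancellation.
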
 
\begin{proof}
(\eqref{P1}$\iff$\eqref{P3})\enspace It is a classical result in Hilbert space theory that \eqref{P1} is equivalent to the statement ``for all $c \in C$, $\bra{Rz - z}{c - z} \le 0$'', which is obviously equivalent to
\begin{equation}\label{P2}
\sigma_C(Sz) \le \bra{Sz}{z}.
\end{equation}
From \eqref{3Norms}, $\bra{Sz}{z} = -\half\|Sz\|^2$, so \eqref{P2} is equivalent to: ``$\sigma_C(Sz) \le -\half\|Sz\|^2$'',\quad i.e.,\quad``$\sigma_C(Sz) + \half\|Sz\|^2 \le 0$'',\quad which is exactly \eqref{P3}.
   
(\eqref{P3}$\iff$\eqref{P5}) If \eqref{P3} is true then, from the above equivalences, \eqref{P2} is true.   From \eqref{P2}, \eqref{P3} and \eqref{mz},
\begin{equation*}
\sigma_{C}(Se) + \bra{Sz -  Se}{e} - \bra{Sz}{z} \le \sigma_{C}(Se) + \bra{Sz -  Se}{e} - \sigma_{C}(Sz) \le 0.
\end{equation*}
Since $z \in C$, $\bra{Se}{z} \le \sigma_{C}(Se)$.   Consequently,
\begin{equation*}
\bra{Se}{z} + \bra{Sz -  Se}{e} - \bra{Sz}{z} \le 0,
\end{equation*}
from which\quad $\bra{S(z -  e)}{z - e} \ge 0$.\quad   It now follows from \eqref{3Norms} that\quad $\|S(z - e)\| = 0$, and so\quad $Sz = Se = d$,\quad giving \eqref{P5}.   Finally, it is obvious that \eqref{P5}$\lr$\eqref{P3}.
\end{proof}
%
\section{Cycles of projections}\label{Cycles}
Let $X := H^m$, $C := C_1 \times \cdots \times C_m \subset H^m$,
\begin{equation*}
R(x_1,x_2, \dots, x_m) := (x_m,x_1,\dots,x_{m - 1}),
\end{equation*}
and $Y$ and $d = (d_1,d_2,\dots,d_m) \in Y$ be as in \Lem{eLem}.
$R$ is clearly a linear isometry and $R^m = I_X$. Then $Ad =  (\tsum_{i = 1}^{m}d_i)(1,1,\dots,1)$ and so, since $d \in Y$, $\tsum_{i = 1}^{m}d_i = 0$.  Then, for all $x = (x_1,x_2, \dots, x_m) \in X$,
\begin{equation}\label{S}
S(x_1,x_2,\dots,x_m) = (x_m - x_1,x_1 - x_2, \dots, x_{m - 1} - x_m).
\end{equation}
As we postulated in \Thm{Pthm}, $P_C$ is the projection from $X$ onto $C$.    For all $x = (x_1,x_2, \dots, x_m) \in X$, $P_C(x) = (P_1x_1,P_2x_2, \dots, P_mx_m)$, and $z$ is a cycle exactly when $z = P_CRz$.   (See \cite[Equations (12), (14) and (16), p.\ 3]{ABRW}.)   We now have a simple characterization of cycles.
\begin{lemma}\label{dLem}
$(z_1,z_2,\dots,z_m)$ is a cycle if, and only if,
\begin{equation}\label{zd}
\all\ i = 1,\dots,m,\ z_i \in C_i \hbox{ and, }
\all\ i = 1,\dots, m - 1,\ z_{i} - z_{i + 1} = d_{i + 1}.
\end{equation}
\begin{proof}
Since $\tsum_{i = 1}^{m}d_i = 0$, it follows from \eqref{zd} that $z_m - z_1 = d_1$ and the result is immediate from \Thm{Pthm}(\eqref{P1}$\iff$\eqref{P5}) and \eqref{S}. 
\end{proof}
\end{lemma}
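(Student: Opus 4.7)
The plan is to apply \Thm{Pthm} with the current $X = H^m$ and $C = C_1 \times \cdots \times C_m$, and translate the resulting abstract condition $z \in S^{-1}d$ into the coordinate-wise statement in \eqref{zd} using formula \eqref{S} for $S$. Since the lemma records that $z$ being a cycle is exactly $z = P_C R z$, the equivalence (\eqref{P1}$\iff$\eqref{P5}) from \Thm{Pthm} is what I would invoke.

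First I would handle the ``cycle'' direction. Assuming $(z_1,\dots,z_m)$ is a cycle, we have $z_i = P_i z_{i-1}$ (indices mod $m$), which forces each $z_i \in C_i$, i.e., $z \in C$. By \Thm{Pthm}, $Sz = d$. Substituting \eqref{S} gives $(z_m - z_1,\,z_1 - z_2,\,\dots,\,z_{m-1} - z_m) = (d_1,d_2,\dots,d_m)$, so in particular $z_i - z_{i+1} = d_{i+1}$ for $i = 1,\dots,m-1$, which is \eqref{zd}.

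Conversely, suppose \eqref{zd} holds. Then $z \in C$. The $m-1$ equalities $z_i - z_{i+1} = d_{i+1}$ give the last $m-1$ coordinates of $Sz$. For the first coordinate, sum the given relations to get $z_1 - z_m = \sum_{i=2}^m d_i$, and since $\sum_{i=1}^m d_i = 0$ (noted just before \eqref{S}), this equals $-d_1$, so $z_m - z_1 = d_1$. Hence $Sz = d$, i.e., $z \in S^{-1}d$, and \Thm{Pthm}(\eqref{P5}$\Rightarrow$\eqref{P1}) yields $z = P_C R z$, i.e., $(z_1,\dots,z_m)$ is a cycle.

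There is no real obstacle here; the only mildly non-trivial step is noticing that the ``missing'' $m$th equation $z_m - z_1 = d_1$ is automatic from the other $m-1$ together with $\sum_i d_i = 0$, which is why the lemma only needs to state $m-1$ differences. Everything else is mechanical translation between the abstract statement of \Thm{Pthm} and the coordinates afforded by \eqref{S}.
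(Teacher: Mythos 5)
Your proposal is correct and follows exactly the paper's (much terser) argument: apply \Thm{Pthm}(\eqref{P1}$\iff$\eqref{P5}) to $z\in C$, translate $Sz=d$ coordinatewise via \eqref{S}, and recover the $m$th equation $z_m-z_1=d_1$ from the other $m-1$ together with $\tsum_{i=1}^m d_i=0$. You have simply written out the details the paper leaves implicit.
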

It is now convenient to make a notational change.  Let $v := -R^{m - 1}d$, from which $\tsum_{i = 1}^{m}v_i = 0$ and $d = -Rv$.   With this substitution, we get:  
\begin{theorem}\label{vThm}
$(z_1,z_2,\dots,z_m)$ is a cycle if, and only if,
\begin{equation}\label{zv}
\all\ i = 1,\dots,m, z_i \in C_i \hbox{ and, }
\all\ i = 1,\dots,m - 1,\ z_{i + 1} = z_{i} + v_{i}.
\end{equation}
\end{theorem}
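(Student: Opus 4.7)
The plan is to deduce \Thm{vThm} directly from \Lem{dLem} by unpacking the notational substitution $v := -R^{m-1}d$ and checking the arithmetic. There is no new content, only bookkeeping; the real work was done in establishing \Lem{dLem}.

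First I would compute the components of $v$ explicitly. Since $R$ acts on $H^m$ by $R(x_1,\dots,x_m) = (x_m,x_1,\dots,x_{m-1})$, iterating gives that $R^k$ shifts each coordinate $k$ positions to the right cyclically. In particular, $R^{m-1}$ is a left shift by one position, so $R^{m-1}(d_1,d_2,\dots,d_m) = (d_2,d_3,\dots,d_m,d_1)$. Therefore $v_i = -d_{i+1}$ for $i = 1,\dots,m-1$ and $v_m = -d_1$. Consequently $\sum_{i=1}^m v_i = -\sum_{i=1}^m d_i = 0$, which is the claim that the translation vectors sum to zero. (The stated relation $d = -Rv$ is the same computation read in reverse, so no independent verification is needed.)

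Next I would invoke \Lem{dLem}. It says that $(z_1,\dots,z_m)$ is a cycle exactly when $z_i \in C_i$ for each $i$ and $z_i - z_{i+1} = d_{i+1}$ for $i = 1,\dots,m-1$. Substituting $d_{i+1} = -v_i$, the latter condition becomes $z_i - z_{i+1} = -v_i$, i.e.\ $z_{i+1} = z_i + v_i$, for $i = 1,\dots,m-1$. This is exactly \eqref{zv}, so the two characterizations coincide and the theorem follows.

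The only step that requires any attention is the index calculation for $R^{m-1}$; once that is done, the equivalence is purely a relabeling. There is no genuine obstacle.
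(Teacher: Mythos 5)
Your proposal is correct and matches the paper's intent exactly: the paper offers no separate proof of Theorem~\ref{vThm}, treating it as an immediate consequence of Lemma~\ref{dLem} under the substitution $v:=-R^{m-1}d$, and your index computation ($R^{m-1}$ is the inverse of $R$, hence a cyclic left shift, so $v_i=-d_{i+1}$) is precisely the bookkeeping being suppressed. Nothing further is needed.
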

We now prove the geometry conjecture:
\begin{theorem}\label{Geometric}
We have
\begin{equation}\label{w2}
\{z_m: (z_1,\dots z_m)\hbox{ is a cycle}\} =   C_{m} \cap
(C_{m - 1} + v_{m - 1}) \cap
(C_{1} + \tsum_{i = 1}^{m - 1}v_i).
\end{equation}
\end{theorem}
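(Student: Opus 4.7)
The plan is to reduce this almost immediately to \Thm{vThm}. Observe that the conditions $z_{i+1} = z_i + v_i$ for $i = 1, \dots, m-1$ form a solvable recurrence in which each $z_i$ is determined by $z_m$ via the telescoping formula
\[
z_i = z_m - \tsum_{j=i}^{m-1} v_j \qquad (i = 1, \dots, m).
\]
So the question reduces to: for which $z_m$ do all the resulting $z_i$'s land in the correct $C_i$?

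For the forward inclusion, I would start with a cycle $(z_1, \dots, z_m)$ and use \Thm{vThm} to extract the above telescoping formula, then rewrite ``$z_i \in C_i$'' as ``$z_m \in C_i + \sum_{j=i}^{m-1} v_j$''. Running $i$ from $1$ to $m$ (with the empty sum for $i=m$ giving $z_m \in C_m$) delivers membership in the intersection on the right-hand side of \eqref{w2}. For the reverse inclusion, I would start with $z_m$ in that intersection, define the $z_i$'s by the telescoping formula, and note that (i) each $z_i$ lies in $C_i$ by hypothesis, and (ii) the relations $z_{i+1} = z_i + v_i$ hold by construction, so \Thm{vThm} guarantees that $(z_1, \dots, z_m)$ is a cycle.

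There is essentially no obstacle: once \Thm{vThm} is in hand, the entire content of the geometry conjecture is that the linear recurrence can be telescoped. The one small thing to keep in mind is the ``wrap-around'' relation $z_1 = z_m + v_m$, which is not stated as a hypothesis of \Thm{vThm} but is nonetheless automatic there, since $\sum_{i=1}^m v_i = 0$ forces $z_1 - z_m = -\sum_{i=1}^{m-1} v_i = v_m$; this is what makes the telescoping consistent with the cyclic structure and assures that no additional constraint on $z_m$ is hidden in the problem. Consequently, the proof is a few lines of index manipulation after invoking \Thm{vThm}.
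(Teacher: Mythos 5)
Your proof is correct and takes essentially the same route as the paper: both directions reduce to telescoping the relations $z_{i+1}=z_i+v_i$ furnished by Theorem~\ref{vThm}, with the empty sum giving $z_m\in C_m$. The only (harmless) difference is that for the reverse inclusion the paper re-derives the cycle property by computing $Sw=d$ and invoking Theorem~\ref{Pthm} directly, whereas you simply cite the ``if'' direction of Theorem~\ref{vThm}; both are valid.
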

\begin{proof}
($\subset$)\enspace If $(z_1,z_2,\dots,z_m)$ is a cycle then, from \eqref{zv},  for all $k = 1,\ \dots,\ m - 1$, $z_{m} = z_{k} + \tsum_{i = k}^{m - 1}v_i \subset C_{k} + \tsum_{i = k}^{m - 1}v_i$.   Since $z_m \in C_m$, 
\begin{equation}\label{v}
z_m \in C_{m} \cap
(C_{m - 1} + v_{m - 1}) \cap
(C_{1} + \tsum_{i = 1}^{m - 1}v_i).
\end{equation}
Thus we have established the inclusion ($\subset$) in \eqref{w2}.  
\par
($\supset$)\enspace Suppose, conversely, that $\zeta \in C_{m} \cap
(C_{m - 1} + v_{m - 1}) \cap (C_{1} + \tsum_{i = 1}^{m - 1}v_i)$. Then there exists $w = (w_1,w_2,\dots,w_m) \in C$ such that
\begin{equation}
\zeta = w_{m} = w_{m - 1} + v_{m - 1} = \dots = w_{2} + \tsum_{i = 2}^{m - 1}v_i = w_{1} + \tsum_{i = 1}^{m - 1}v_i.
\end{equation}
From \eqref{S}, using the facts that $\tsum_{i = 1}^{m}v_i = 0$ and $d = -Rv$, 
\begin{align*}
Sw &= S(w_1,w_2,\dots,w_m)
= (w_m - w_1,w_1 - w_2, \dots, w_{m - 1} - w_m)\\
&= (\tsum_{i = 1}^{m - 1}v_i,- v_1, \dots,- v_{m - 1}) = (-v_m,- v_1, \dots,- v_{m - 1}) = -Rv = d.
\end{align*}
It now follows from \Thm{Pthm}(\eqref{P5}$\lr$\eqref{P1}) that $w = P_CRw$, i.e., $w$ is a cycle.   Since $\zeta = w_m$, we have established the inclusion ($\supset$) in \eqref{w2}.  
\end{proof}
\begin{remark}
Even if there are no cycles, \Lem{eLem} still provides $d \in Y$\break satisfying \eqref{mSe} and \eqref{mz}.   If we define $v := -R^{m - 1}d$ as above, $v$ is known as the {\em generalized gap vector}.   See \cite{ABW} for more on this. 
\end{remark}
\section{Other approaches leading to \Thm{Pthm}}\label{Other}
\Thm{FanThm} below was proved by Fan in \cite[Theorem 5, p\ 525]{FanRef}.   It is closely related to fixed point theorems, and has found many applications in optimization and mathematical economics.   It first appeared in \cite{FanOld}.
\begin{theorem}\label{FanThm}
Let $D$ be a compact, convex subset of a topological vector space, and $f\colon\ D \times D \to \RR$.   Suppose that $f$ is quasiconcave with respect to its first variable and lower semicontinuous with respect to its second variable.   Then
\begin{equation*}
\minn_{d \in D}\supn_{b \in D}f(b,d) \le \supn_{b \in D}f(b,b).
\end{equation*}
\end{theorem}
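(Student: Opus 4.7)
The plan is to derive Fan's inequality from the Knaster-Kuratowski-Mazurkiewicz (KKM) lemma, which is the standard route and, in spirit, is the reason this result sits deeper than \Thm{MMT}. Set $\alpha := \supn_{b \in D} f(b,b)$. If $\alpha = +\infty$ the inequality is vacuous, so assume $\alpha \in \RR$. For each $b \in D$ introduce the sublevel slice $F(b) := \{d \in D : f(b,d) \le \alpha\}$. Each $F(b)$ contains $b$, so is nonempty, and by lower semicontinuity of $f(b,\cdot)$ it is closed in the compact set $D$, hence compact.

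The crux is to verify the KKM covering condition: for every finite collection $b_1,\dots,b_n \in D$, the convex hull $\mathrm{conv}\{b_1,\dots,b_n\}$ lies in $\bigcup_{i=1}^n F(b_i)$. Suppose toward a contradiction that some $d \in \mathrm{conv}\{b_1,\dots,b_n\}$ satisfies $f(b_i,d) > \alpha$ for every $i$. Quasiconcavity of $f(\cdot,d)$ makes the superlevel set $\{b \in D : f(b,d) > \alpha\}$ convex; since it contains each $b_i$, it contains their convex hull, and in particular $d$ itself. But then $f(d,d) > \alpha$, contradicting the definition of $\alpha$.

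Once the KKM condition is in hand, the KKM lemma yields $\bigcap_{b \in D} F(b) \ne \emptyset$. Any $d^\star$ in this intersection satisfies $f(b,d^\star) \le \alpha$ for every $b \in D$, so $\supn_{b \in D} f(b,d^\star) \le \alpha$. Since $d \mapsto \supn_{b \in D} f(b,d)$ is lower semicontinuous on the compact set $D$ (as a supremum of lower semicontinuous functions), the infimum on the left-hand side of Fan's inequality is attained as a minimum and is bounded above by $\alpha = \supn_{b \in D} f(b,b)$, which is what was to be shown.

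The main obstacle here is one of machinery rather than of detail: invoking the KKM lemma imports Brouwer's fixed point theorem, which is strictly deeper than the finite-dimensional Hahn-Banach theorem powering \Thm{MMT}. An equivalent route via a Browder or Kakutani-type fixed point argument is available, but the KKM approach is the most transparent, requires the fewest topological hypotheses on the ambient vector space, and is the one Fan originally adopted in \cite{FanRef}.
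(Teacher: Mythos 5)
The paper does not prove this theorem at all --- it is quoted verbatim from Fan \cite{FanRef} and used as a black box --- so there is no internal proof to compare against. Your KKM argument is correct and is essentially Fan's original proof: the sublevel slices $F(b)=\{d\in D: f(b,d)\le\alpha\}$ are nonempty (they contain $b$), closed by lower semicontinuity, hence compact, and the KKM covering condition follows from the convexity of the strict superlevel sets $\{b: f(b,d)>\alpha\}$ (a nested union of the convex sets $\{b: f(b,d)\ge t\}$, $t>\alpha$), exactly as you argue. Your closing remark is also apt and consistent with the paper's framing: this route genuinely imports Brouwer-level machinery via the KKM lemma, whereas the minimax theorem (Theorem~\ref{MMT}) used in the paper's primary approach needs only the finite-dimensional Hahn--Banach theorem.
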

\par
It was actually proved in \eqref{bb1} and \eqref{bb2} in the proof of \Lem{mLem}, that, for all $b \in D$, $f(b,b) = 0$.   Thus, from \Thm{FanThm}, $\minn_{d \in D}\supn_{b \in D}f(b,d) \le 0$, and we obtain \eqref{mz} as before.   This is the {\em  Fan's inequality approach}.
\bigbreak
The statement of \Lem{eLem} below is identical with that of \Lem{mLem}, except that \eqref{z} and \eqref{Se} are in the reverse order to that of \eqref{mSe} and \eqref{mz}, and the quantifier ``$x \in X$'' in \eqref{z} is less restrictive than the quantifier ``$x \in S^{-1}D$'' in \eqref{mz}.   So \Lem{eLem} can be used instead of \Lem{mLem} to obtain \Thm{Pthm}.   Thus we obtain the {\em maximally monotone operator approach}.
\par
The first of the results on maximal monotonicity that we will use is\break Rockafellar's maximal monotonicity theorem, \cite[Proposition 1, pp.\ 211--212]{RTRMMT}, (see \cite[Theorem 4.6, p.\ 10]{QD2} and \cite[Theorem 3.2, pp. 634--635]{SW} for recent\break developments), which we state in \Thm{SUBDIFF} below:
\begin{theorem}\label{SUBDIFF}
Let $f$ be a proper, convex lower semicontinuous function on a Banach space.   Then the subdifferential of $f$, $\partial f$, is maximally monotone.
\end{theorem}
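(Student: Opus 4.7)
The plan is to split the proof into the easy monotonicity and the substantive maximality. Monotonicity is immediate: if $x_i^* \in \partial f(x_i)$ for $i = 1, 2$, adding the subgradient inequalities $f(x_2) \ge f(x_1) + \bra{x_2 - x_1}{x_1^*}$ and $f(x_1) \ge f(x_2) + \bra{x_1 - x_2}{x_2^*}$ yields $\bra{x_1 - x_2}{x_1^* - x_2^*} \ge 0$ directly.

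For maximality, I would suppose $(x_0, x_0^*) \in X \times X^*$ is monotonically related to $\mathrm{gph}(\partial f)$ and prove that $x_0^* \in \partial f(x_0)$. Replacing $f$ by $x \mapsto f(x + x_0) - \bra{x}{x_0^*} - f(x_0)$, whose subdifferential is a translate of $\partial f$, reduces the problem to showing that the hypothesis $\bra{x}{x^*} \ge 0$ on $\mathrm{gph}(\partial f)$ forces $f \ge f(0) = 0$ everywhere. One then argues by contradiction, assuming some $y$ with $f(y) < 0$ and attempting to exhibit $(\bar x, \bar x^*) \in \mathrm{gph}(\partial f)$ with $\bra{\bar x}{\bar x^*} < 0$.

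The main obstacle is that, in an infinite dimensional Banach space, $\dom(\partial f)$ can be strictly smaller than $\dom(f)$, so neither $\partial f(0)$ nor $\partial f(y)$ need be nonempty, and the hypothesis gives no direct information at $y$ itself. The canonical remedy is the Br{\o}ndsted--Rockafellar theorem: an $\epsilon$-subgradient $x^* \in \partial_\epsilon f(x)$ can, for any $\lambda > 0$, be approximated by a genuine subgradient $\bar x^* \in \partial f(\bar x)$ with $\|\bar x - x\| \le \epsilon/\lambda$ and $\|\bar x^* - x^*\| \le \lambda$. This in turn rests on Ekeland's variational principle, and hence on completeness of $X$, which is what distinguishes the Banach setting from a trivial finite-dimensional argument. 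With this tool in hand, one produces an approximate subgradient near the segment $[0,y]$ whose pairing with its base point is bounded strictly away from $0$ in the negative direction --- for instance by applying Ekeland's principle to $x \mapsto f(x) + \epsilon\|x - y\|$ on a ball around $y$ and reading off the first-order optimality condition --- and then perturbs it via Br{\o}ndsted--Rockafellar with $\epsilon$ and $\lambda$ chosen small enough that the sign of the inner product survives. Tracking these two small parameters carefully, and in particular arranging that $\bra{\bar x}{\bar x^*}$ stays bounded away from $0$ after the $\epsilon/\lambda$ and $\lambda$ corrections, is where the technical core of the proof sits.
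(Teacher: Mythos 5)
The paper does not actually prove this statement: it is Rockafellar's classical theorem, quoted verbatim with a citation to \cite{RTRMMT} and used as a black box in the ``maximally monotone operator approach'' of Section 5. So there is no in-paper argument to compare against, only the literature. Your route --- monotonicity by adding the two subgradient inequalities, then maximality by translating the monotonically related pair to $(0,0)$ and invoking the Br{\o}ndsted--Rockafellar theorem (hence Ekeland, hence completeness) --- is exactly the classical proof (Rockafellar's original one, and the one in Phelps), and your diagnosis of where the difficulty lies, namely that $\dom(\partial f)$ may miss the point where $f$ drops below $f(0)$, is correct.

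As a proof, however, the proposal stops precisely at the point where the theorem's content lives. The step you defer as ``the technical core'' --- producing $(\bar x,\bar x^*)$ in the graph of $\partial f$ with $\bra{\bar x}{\bar x^*}<0$ and keeping that sign after the perturbation --- is not routine bookkeeping: one must locate a point $ty$ on the segment, with $t$ bounded away from $0$, at which the difference quotient of $s\mapsto f(sy)$ is at most some $-c<0$, extract an ($\eps$-)subgradient $x^*$ there with $\bra{ty}{x^*}\le -tc$, and then control the error $\|\bar x-ty\|\,\|\bar x^*\|+\|ty\|\,\|\bar x^*-x^*\|$, which in particular requires an a priori bound on $\|\bar x^*\|$ or a less naive arrangement of the argument; none of this is carried out, and a reader could not reconstruct it from what is written without redoing the hard part. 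There is also a small but genuine slip in the normalization: you subtract $f(x_0)$, which is illegitimate when $x_0\notin\dom f$ (the normalized function would take the value $-\infty$ and cease to be proper); the standard fix is to omit that constant and prove instead that the normalized function attains its infimum at $0$, which in particular forces finiteness there. In short: right strategy, consistent with the source the paper cites, but the decisive estimate is named rather than proved.
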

The second result on maximal monotonicity that we shall use follows from Rockafellar's sum theorem, \cite[Proposition 1, p.\ 77]{RTRSUM} (see \cite[Theorem 8.4\break(a)$\lr$(d), pp.\ 1036--1037]{QD1} for recent developments), which we state in\break \Thm{SUMS} below:
\begin{theorem}\label{SUMS}
Let $S$ and $T$ be maximally monotone operators on a Banach space and $S$ have full domain.   Then $S + T$ is maximally monotone.
\end{theorem}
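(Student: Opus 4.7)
The plan is to follow Rockafellar's classical route via a Minty-type surjectivity criterion. After a translation we may assume $0 \in S(0)$ and $0 \in T(0)$, so the claim reduces to the following: any pair $(x_0, y_0^*)$ monotonically related to the graph of $S + T$ must itself lie in that graph. Equivalently, one shows that a suitable coercive perturbation $J$ makes $J + S + T$ surjective; in a Hilbert space $J$ can be taken to be the shift $x \mapsto x - x_0$, while in a general Banach space a selection of the normalized duality map based at $x_0$ is required.

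First I would introduce a Yosida-type regularization of $T$, producing for each $\lambda > 0$ a maximally monotone operator $T_\lambda$ with $\dom T_\lambda = X$ whose graph converges to that of $T$ as $\lambda \downarrow 0$. Since $S$ has full domain it is locally bounded on the interior of its domain (Rockafellar's local boundedness theorem), so the perturbed inclusion $y_0^* \in Jx_\lambda + Sx_\lambda + T_\lambda x_\lambda$ admits a solution $x_\lambda$ by a fixed-point/variational argument that rests ultimately on \Thm{SUBDIFF} applied to a convex potential built from $S$, $T_\lambda$, and $J$.

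Second, I would establish boundedness of $\{x_\lambda\}$ as $\lambda \downarrow 0$ by pairing the defining inclusion with $x_\lambda - x_0$ and invoking strong monotonicity of $J$ together with monotonicity of $S$ and $T_\lambda$. A weak cluster point $x$ should then satisfy $y_0^* \in Jx + Sx + Tx$ via a standard demiclosedness argument for monotone graphs, which combined with Minty's characterization yields maximality of $S + T$.

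The main obstacle is the passage to a general Banach space: the Yosida regularization loses the clean single-valued Lipschitz form it enjoys in a Hilbert space, the duality map is generally multivalued, and the role played by the identity in the Hilbert case has to be taken over by a strongly monotone selection whose coercivity still delivers boundedness of $\{x_\lambda\}$. This is precisely where the full-domain hypothesis on $S$ is used essentially, and where Rockafellar's original argument has to work hardest; any proof attempt must reconcile the approximate solutions $x_\lambda$ with the limiting graph of $T$ without losing the inclusion that produced them in the first place.
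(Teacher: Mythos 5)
The paper offers no proof of \Thm{SUMS}: it is imported verbatim from Rockafellar's 1970 paper \cite{RTRSUM} and used as a black box in the proof of \Lem{eLem}, so there is no in-paper argument to compare yours against. Judged on its own terms, your sketch follows the correct general outline of Rockafellar's argument (regularize $T$, solve a coercively perturbed inclusion, pass to the limit), but it has three genuine gaps. First, the existence step as you describe it fails: you propose to solve $y_0^* \in Jx_\lambda + Sx_\lambda + T_\lambda x_\lambda$ by applying \Thm{SUBDIFF} to ``a convex potential built from $S$, $T_\lambda$, and $J$,'' but a maximally monotone operator need not be a subdifferential (a rotation by $\pi/2$ in $\RR^2$ is maximally monotone and skew --- indeed the operator $-Q-\half I_Y$ in \Lem{eLem} is exactly of this kind), so no such potential exists and \Thm{SUBDIFF} gives you nothing. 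Rockafellar's actual existence step rests on the separate surjectivity theorem that $J+M$ is onto for any maximally monotone $M$ on a reflexive space (after an Asplund renorming making $J$ single-valued and strictly monotone), combined with local boundedness of $S$ on $\intr\dom S = X$; that theorem has to be invoked or proved, not folded into \Thm{SUBDIFF}.

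Second, the level of generality: the Minty-type surjectivity criterion you lean on is unavailable outside reflexive Banach spaces, and indeed the sum theorem in an arbitrary Banach space is the long-open ``sum problem''; Rockafellar's result (and your sketch) requires reflexivity, which the statement omits. This is harmless for the paper, which only needs the Hilbert case $X = Y$, $J = I_Y$, but your proof cannot be completed at the stated generality. Third, the limit passage is not ``standard demiclosedness'': the graph of a maximally monotone operator is not sequentially closed in the weak$\times$weak topology, so from $x_\lambda \rightharpoonup x$ and $y_\lambda^* \in T_\lambda x_\lambda$ you cannot conclude membership in the limiting graph without first establishing $\limsup_\lambda \langle x_\lambda - x, y_\lambda^* - y^*\rangle \le 0$ and then splitting the limit functional $y_0^* - Jx$ correctly between $Sx$ and $Tx$; this is where the hard work in Rockafellar's proof actually sits, and the sketch does not indicate how to carry it out.
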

The final result on maximal monotonicity that we shall use is Minty's\break theorem, \cite{Minty} or \cite[Theorem 21.1, pp.\ 311]{BC}, which we state in \Thm{MintyThm} below:
\begin{theorem}\label{MintyThm}
Let $S$ be a maximally monotone operator on a Hilbert space $Y$ then there exists $y \in Y$ such that $0 \in y + Sy$.
\end{theorem}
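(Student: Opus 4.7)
The statement is equivalent to $0 \in \hbox{\rm ran}(I+S)$, and I would in fact establish the stronger assertion that $\hbox{\rm ran}(I+S) = Y$; this yields $0 \in y + Sy$ as a special case and, more generally, solvability of $z \in y + Sy$ for every $z \in Y$. My plan is to show that $D := \hbox{\rm ran}(I+S)$ is nonempty, closed, and open in $Y$; connectedness of $Y$ then forces $D = Y$.

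Nonemptiness is trivial (any point in $\hbox{\rm gra}(S)$ produces a point of $D$). The decisive starting observation is that monotonicity of $S$ alone makes $(I+S)^{-1}$ single-valued and nonexpansive on $D$: if $u_i = y_i + s_i$ with $s_i \in Sy_i$ for $i = 1, 2$, then substituting $s_1 - s_2 = (u_1 - u_2) - (y_1 - y_2)$ into $\bra{y_1 - y_2}{s_1 - s_2} \ge 0$ yields $\|y_1 - y_2\|^2 \le \bra{y_1 - y_2}{u_1 - u_2}$. Closedness of $D$ follows: a convergent sequence $u_n \to u$ in $D$ has Cauchy preimages $y_n \to y$ by this estimate, so $s_n := u_n - y_n \to u - y$, and maximal monotonicity (which closes the graph of $S$ under strong-strong limits) gives $u - y \in Sy$, hence $u \in D$.

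The main obstacle is openness of $D$. Given $u_0 = y_0 + s_0 \in D$ and $u$ close to $u_0$, I want to solve $u \in y + Sy$ locally. Since the resolvent $J := (I+S)^{-1}$ is known only on $D$ a priori, one cannot feed perturbed data directly into a Banach contraction. My intended route is a continuation argument in a scaling parameter $\lambda > 0$: show that $\{\lambda > 0 : \hbox{\rm ran}(I + \lambda S) = Y\}$ is both open and closed in $(0, \infty)$, starting from small $\lambda$ where solvability reduces to a Banach contraction and propagating to $\lambda = 1$. The contraction would be applied not to $J$ itself but to a blended map $(1-t)J_\lambda + t J_\mu$ for nearby $\lambda, \mu$, which is where maximality is invoked in full.

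A variational alternative, which sidesteps the openness step entirely, is to work with the Fitzpatrick function $F_S$ of $S$ and to minimize
\begin{equation*}
\phi(y) := F_S(y, -y) + \|y\|^2.
\end{equation*}
The function $\phi$ is proper, convex, lower semicontinuous, nonnegative (since $F_S(y, z) \ge \bra{y}{z}$ always), and coercive (testing the defining supremum in $F_S$ against a single fixed pair in $\hbox{\rm gra}(S)$ yields a quadratic lower bound). Hence $\phi$ attains its infimum on $Y$; the first-order optimality condition at the minimizer, combined with the characterization $F_S(y, z) = \bra{y}{z} \iff (y, z) \in \hbox{\rm gra}(S)$ (which holds exactly because $S$ is maximally monotone), should force the infimum to equal $0$ and thereby deliver a $y$ with $0 \in y + Sy$. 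Verifying that the optimality condition really does close the gap is itself nontrivial and is the analog here of the openness obstacle.
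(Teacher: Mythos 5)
The paper does not prove this statement at all: it is quoted as Minty's classical theorem, with references to \cite{Minty} and to \cite[Theorem~21.1]{BC}. So the only question is whether your proposal would constitute a valid proof, and as written it would not: both of your routes stop exactly where the real content of the theorem lies. In the continuation route, the fatal step is the claimed initialization ``starting from small $\lambda$ where solvability reduces to a Banach contraction.'' For a general maximally monotone $S$ --- say the normal cone operator of a closed convex set, or $\partial f$ for a nonsmooth $f$ --- $S$ is multivalued and unbounded, and there is no Lipschitz structure against which to contract, for any $\lambda>0$. The resolvent identity does let you propagate $\hbox{\rm ran}(I+\lambda_0 S)=Y$ to nearby $\lambda$ by a contraction in $J_{\lambda_0}$, but it cannot produce the first such $\lambda_0$; surjectivity of $I+\lambda S$ for even one $\lambda$ \emph{is} Minty's theorem. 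Likewise, openness of $\hbox{\rm ran}(I+S)$ is not a lemma on the way to the theorem --- it is essentially equivalent to it. (Your preliminary steps are fine: firm nonexpansiveness of the resolvent from monotonicity alone, and closedness of the range via strong closedness of the graph under maximality, are both correct.)

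Your variational alternative is much closer to the proof in \cite[Theorem~21.1]{BC} that the paper cites, but you explicitly leave the decisive step unverified, and the one--variable reduction introduces an additional obstruction. Writing $\phi(y)=F_S(y,-y)+\|y\|^2=(F_S\circ L)(y)+\|y\|^2$ with $Ly=(y,-y)$, the optimality condition $0\in\partial\phi(y_0)$ is only useful if $\partial(F_S\circ L)(y_0)=L^*\partial F_S(Ly_0)$, a chain rule that requires a constraint qualification you have not checked (and $F_S$ need not be continuous anywhere on the range of $L$). The standard argument avoids this by minimizing the two--variable function $F_S(x,u)+\half\|x\|^2+\half\|u\|^2$, where the quadratic term is continuous everywhere so the sum rule is free; one then gets $(-x_0,-u_0)\in\partial F_S(x_0,u_0)$, applies Fenchel--Young, and --- this is the ingredient missing from your sketch --- invokes the inequality $F_S^*(u,x)\ge\bra{x}{u}$, valid because $S$ is maximally monotone, to force $\|x_0+u_0\|^2\le 0$ and $F_S(x_0,u_0)=\bra{x_0}{u_0}$, whence $(x_0,-x_0)\in\hbox{\rm gra}(S)$. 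Without that conjugate inequality the first--order condition does not ``close the gap,'' so the proposal as it stands is an accurate roadmap with the key lemma unproved rather than a proof.
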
  
\begin{lemma}\label{eLem} There exist $d,e \in Y$ such that,
\begin{equation}\label{z}
\all\ {x \in X},\ \sigma_{C}(Se) + \bra{Sx -  Se}{e} - \sigma_{C}(Sx) \le 0,
\end{equation}
and
\begin{equation}\label{Se}
d = Se \in D.
\end{equation}
\end{lemma}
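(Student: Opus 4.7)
The plan is to reduce the lemma to a single monotone inclusion on $Y$, and then solve that inclusion by combining the three cited theorems on maximally monotone operators.

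First, I would show it suffices to produce $d \in Y$ with $Qd \in \partial(\sigma_C|_Y)(d)$. Given such a $d$, set $e := Qd \in Y$; then $Se = SQd = d$ by \eqref{SB}, and the subdifferential inequality $\sigma_C(y) \ge \sigma_C(d) + \bra{y - d}{e}$ for all $y \in Y$ yields \eqref{z} upon substituting $y = Sx$ (which lies in $Y$ by \eqref{SXY}). Taking $y = 0$ and using \eqref{m2AB} to compute $\bra{d}{e} = \bra{d}{Qd} = -\half\|d\|^2$ gives $\sigma_C(d) + \half\|d\|^2 \le 0$, that is, $d \in D$, so \eqref{Se} holds.

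To produce such a $d$, I would look for a maximally monotone operator $T$ on $Y$ so that $0 \in d + T(d)$ forces $Qd \in \partial(\sigma_C|_Y)(d)$, and then apply \Thm{MintyThm}. The naive attempt $T = \partial(\sigma_C|_Y) - (Q + I)$ fails, and this is the main obstacle: by \eqref{m2AB} the linear piece $-(Q + I)$ has quadratic form $-\half\|\cdot\|^2$, so it is not monotone. The fix is to scale $\sigma_C|_Y$ by $2$: take $T := \partial(2\sigma_C|_Y) - (2Q + I)$. Now $-(2Q + I)$ has identically vanishing quadratic form by \eqref{m2AB} (it is in fact skew, reflecting the identity $Q + Q^* = -I_Y$), so $-(2Q + I)$ is a continuous linear monotone operator defined on all of $Y$, hence maximally monotone.

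Meanwhile $\partial(2\sigma_C|_Y)$ is maximally monotone by \Thm{SUBDIFF}, since $2\sigma_C|_Y$ is proper (because $\sigma_C(0) = 0$), convex and lower semicontinuous on the Hilbert space $Y$. Then \Thm{SUMS} makes $T$ maximally monotone (the linear summand having full domain), and \Thm{MintyThm} produces $d \in Y$ with
\begin{equation*}
0 \in d + T(d) = 2\partial(\sigma_C|_Y)(d) - 2Qd;
\end{equation*}
dividing by $2$ gives $Qd \in \partial(\sigma_C|_Y)(d)$, completing the reduction. The entire argument hinges on spotting the scaling factor $2$, which converts the obstruction coming from the quadratic form in \eqref{m2AB} into the skew-symmetric $-(2Q + I)$.
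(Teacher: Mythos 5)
Your proposal is correct and follows essentially the same route as the paper: both decompose the problem into the maximally monotone subdifferential of $\sigma_C|_Y$ plus the skew linear operator $-(2Q+I_Y)$ (the paper writes it as $-Q-\half I_Y$ and then doubles when invoking Minty/the resolvent of $-2Q-I_Y+2\partial\sigma_C$ at $0$), apply Theorems~\ref{SUBDIFF}, \ref{SUMS} and \ref{MintyThm} to obtain $Qd\in\partial\sigma_C(d)$, and then set $e:=Qd$ and deduce \eqref{z} and \eqref{Se} exactly as you do.
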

\begin{proof}
$\sigma_C$ is proper, convex and lower semicontinuous on $Y$.   From \Thm{SUBDIFF},  $\partial\sigma_{C}$ is maximally monotone.   From \eqref{m2AB},
\begin{equation*}
\all\ y \in Y,\quad \bra{(-Q - \half I_Y)y}{y} = 0
\end{equation*}
and so $-Q - \half I_Y$ is {\em skew} with full domain $Y$ and hence maximally monotone and so, from \Thm{SUMS},  $-Q - \half I_Y + \partial\sigma_{C}$ is also maximally monotone, where $\partial\sigma_{C}$ is the subdifferential of $\sigma_C$ in the Hilbert space $Y$. 
\par
\Thm{MintyThm}, or consideration of the resolvent of $-2Q - I_Y + 2\partial\sigma_{C}$ evaluated at $0$, provides $d \in Y$ such that
\begin{equation*}
0 \in \half d + (-Q - \half I_Y + \partial\sigma_{C})(d),\hbox{ that is to say, }0 \in -Qd + \partial\sigma_{C}(d).
\end{equation*}
Thus
\begin{equation*}
Qd \in \partial\sigma_{C}(d)\dand\supn_{y \in Y}[\sigma_{C}(d) + \bra{y -  d}{Qd} - \sigma_{C}(y)] \le 0.
\end{equation*}
If $x \in X$ then, from \eqref{SXY}, $Sx \in Y$ and so 
\begin{equation}\label{ySY}
\supn_{x \in X}[\sigma_{C}(d) + \bra{Sx  -  d}{Qd} - \sigma_{C}(Sx)] \le 0.
\end{equation}
Let $e := Qd \in Y$.   (Compare \cite[Equation (40), p.\ 6]{ABRW}.)    From  \eqref{SB}, $Se = d \in Y$, so \eqref{z} follows from \eqref{ySY}.  If we substitute $x = 0$ in \eqref{z}, we see that\break $\sigma_{C}(Se) \le \bra{Se}{e}$.\quad  From \eqref{3Norms},\quad $\bra{Se}{e} = -\half\|Se\|^2$.\quad   Thus\quad $\sigma_{C}(Se) \le -\half\|Se\|^2$,\quad and \eqref{Se} follows.
\end{proof}

\end{document}